\newtheorem{thm}{Theorem}[section]
\newtheorem{lem}[thm]{Lemma}
\newtheorem{prop}[thm]{Proposition}
\newtheorem{cor}[thm]{Corollary}
\theoremstyle{definition}
\newtheorem{rem}[thm]{Remark}
\newtheorem{defi}[thm]{Definition}
\newcommand{\C}{\mathbb{C}}
\newcommand{\CC}{\mathbb{C}}
\newcommand{\CP}{\mathbb{C}P}
\newcommand{\RP}{\mathbb{R}P}
\renewcommand{\epsilon}{\varepsilon}
\newcommand{\R}{\mathbb{R}}
\newcommand{\RR}{\mathbb{R}}
\newcommand{\Z}{\mathbb{Z}}
\newcommand{\ZZ}{\mathbb Z}
\newcommand{\ignore}[1]{\relax}
\newcommand{\Tor}{{\mbox{Tor}}}
\newcommand{\Vol}{{\mbox{Vol}}}
\begin{document}
\title{
  Non-existence of torically maximal hypersurfaces}
\author{Erwan Brugall\'e}
\author{Grigory Mikhalkin}
\author{Jean-Jacques Risler}
\author{Kristin Shaw}
 \address{Erwan Brugall\'e, \'Ecole Polytechnique,
Centre Mathématiques Laurent Schwartz, 91 128 Palaiseau Cedex, France}

\email{erwan.brugalle@math.cnrs.fr}
\renewcommand\rightmark{Non-existence of torically maximal hypersurfaces}
\renewcommand\leftmark{E. Brugall\'e, G. Mikhalkin, J.-J.~Risler, K.~Shaw}

\address{Grigory Mikhalkin, Section de math\'ematiques,
Universit\'e de Gen\`eve,
Villa Battelle,
1227 Carouge, Suisse}
\email{grigory.mikhalkin@unige.ch}

\address{Jean-Jacques Risler, Universit\'e Pierre et Marie Curie,
4 Place Jussieu, 75 005 Paris, France}
\email{jean-jacques.risler@imj-prg.fr}

\address{Kristin Shaw, Fields Institute for Research in Mathematical Sciences, 222 College Street
Toronto, M5T 3J1,
Canada}
\email{kshaw@fields.utoronto.ca}

\subjclass[2010]{Primary 14P25; Secondary 14P05}
\keywords{Simple Harnack curves, real algebraic toric hypersurfaces}

 \thanks{Part of this work was done during the
research stay of E.B. and J.-J.R. at the Centre Interfacultaire Bernoulli (Lausanne)
in the framework of the program
``Tropical geometry in its complex and symplectic aspects''. The authors
are grateful to CIB for the support and excellent working conditions.
Research of G.M. is supported in part by the grant TROPGEO of the European Research Council,
by the grants 141329 and 159240 of the Swiss National Science Foundation,
and by the NCCR SwissMAP of the Swiss National Science Foundation.}

\date{\today} \maketitle

\begin{abstract}
Torically maximal curves (known also as simple Harnack curves)
are real algebraic curves in the projective plane such that their
logarithmic Gau{\ss} map is totally real.
In this paper we show that hyperplanes in projective spaces are the
only 
torically maximal hypersurfaces of higher dimensions.
\end{abstract}

\section{Introduction}

Torically maximal curves, also known as simple Harnack curves, were introduced  and studied in
\cite{Mik-00}.
Since then, they 
have appeared
in several areas of mathematics.
Finding their reasonable 
higher dimensional 
counterparts 
is an open and challenging problem
 (cf. e.g. \cite{AIM06}).
In this note we explore a direct generalisation of toric maximality
for projective hypersurfaces proposed in
\cite[Section 3.4]{Mik-amoebas}.
We show that when $n\ge 3$,
hyperplanes   in projective spaces are the only torically maximal
hypersurfaces
 in this sense.

\medskip

Let $X $ be an  algebraic hypersurface of $(\C^*)^n$ defined by the
equation $P(z_1,\ldots,z_n)=0$. We denote by $\Delta(X)$ the Newton
polytope of the polynomial $P(z_1,\ldots,z_n)$, and by
$\overline X$ the topological closure of $X$
in the toric variety $\Tor(X)$ defined by
$\Delta(X)$. Note that $X\subset (\C^*)^n$ determines $\Delta(X)$ only
up to a
translation
in $\Z^n$, however this does not play a role
in what follows.

\begin{defi}
We say that a hypersurface $X \subset (\C^*)^n$ is {\em torically non-singular}
if the polytope $\Delta(X)$ is $n$-dimensional
and the intersection of $\overline X$
with each torus orbit of $\Tor(X)$
is non-singular in this orbit. If $X$ is torically non-singular
then $\overline X$ is 
transverse to all
torus orbits of $\Tor(X)$.

We say that $X\subset (\C^*)^n$ is {\em torically projective} if
$\Tor(X)=\C P^n$.

\end{defi}

Following \cite{Ka-Gauss} we
define the \emph{logarithmic Gau{\ss} map} of
a non-singular hypersurface $X\subset (\C^*)^n$ as
$$\begin{array}{cccc}
 \gamma_X:&  X& \longrightarrow &\C P^{n-1}
\\ &(z_1,\ldots,z_n)&\longmapsto&[z_1\frac{\partial P}{\partial
    z_1}(z_1,\ldots,z_n):\ldots : z_n\frac{\partial P}{\partial
    z_n}(z_1,\ldots,z_n)]
\end{array}. $$
The map $\gamma_X$ is 
just
the usual Gau{\ss} map after the reparameterisation
of $X$ with the help of a local branch of the holomorphic logarithm restricted to $X$;
 clearly the map
  $\gamma_X$ does not depend on the chosen branch of the logarithm.
It is proved in 
\cite[Section 3.2]{Mik-00} that
when
$X$ is torically non-singular,
the map  $\gamma_X$ extends to an algebraic map $\overline
\gamma_X:\overline X\to
\C P^{n-1}$ such that 
\begin{equation}\label{eqn:degree}
\text{deg}(\overline{\gamma}_X) =  \Vol_n(\Delta(X)),
\end{equation}
where $\Vol_n$ denotes the
lattice volume of an $n$-dimensional polytope 
(i.e. $n!$  times the Euclidean volume).

Since   $\gamma_X$ is a
 map between $2$
  manifolds of the same
dimension, the fibre $\gamma_X^{-1}(y)$ is finite for almost all $y$ in $\CP^{n-1}$.
Our first result is that
$\gamma_X$  is actually finite 
in the case of
torically projective hypersurfaces, that is  $\gamma_X^{-1}(y)$ is
finite for \emph{any} $y\in \CP^{n-1}$. 

\begin{thm}\label{thm:finiteLog}
If $X \subset (\CC^*)^n$ is a torically non-singular  projective hypersurface, then the logarithmic Gau{\ss} map
$\overline \gamma_X: \overline X \to \CP^{n-1}$ is finite. 
\end{thm}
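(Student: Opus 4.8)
The plan is to realise $\overline\gamma_X$ as the morphism attached to an explicit base-point-free linear system of degree $d:=\deg\overline X$ on $\overline X$, and to deduce finiteness from the ampleness of $\mathcal O_{\overline X}(d)$. First I would pass to homogeneous coordinates $[Z_0:\dots:Z_n]$ on $\CP^n$. Since $X$ is torically projective, $\Delta(X)=d\,\Delta_n$ and $\overline X=\{F=0\}$ for a homogeneous $F(Z_0,\dots,Z_n)$ of degree $d$, with $P(z_1,\dots,z_n)=F(1,z_1,\dots,z_n)$ and $z_i=Z_i/Z_0$. A direct computation using homogeneity gives $z_i\,\partial P/\partial z_i=Z_i\,\partial_i F$ on $X$ (with $\partial_i:=\partial/\partial Z_i$), so on the open orbit
\[
\gamma_X=[Z_1\partial_1 F:\dots:Z_n\partial_n F].
\]
By the Euler relation $\sum_{i=0}^n Z_i\partial_i F=d\,F$, which vanishes on $\overline X$, the $n+1$ forms $G_i:=Z_i\partial_i F$, all homogeneous of degree $d$, define on $\overline X$ a rational map into the hyperplane $\{\xi_0+\dots+\xi_n=0\}\cong\CP^{n-1}$ of $\CP^n$, and this map agrees with $\gamma_X$ on the open orbit, hence with the extension $\overline\gamma_X$ wherever the latter is defined.

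The crucial step is to show that the system $\langle G_0,\dots,G_n\rangle$ is base-point-free on $\overline X$, and this is exactly where torical non-singularity enters. A point $[Z]\in\overline X$ is a base point iff $Z_i\partial_i F=0$ for all $i$. On the open orbit all $Z_i\neq0$, so this forces $\nabla F=0$, contradicting non-singularity of $X$. On a torus orbit $O_S$ (where $Z_i\neq0$ precisely for $i\in S$) the conditions reduce to $\partial_iF=0$ for all $i\in S$. Writing $F_S$ for the restriction of $F$ to the coordinate subspace $\{Z_j=0:j\notin S\}$, one has $\partial_iF|_{O_S}=\partial_iF_S$ for $i\in S$, and torical non-singularity says precisely that $\overline X\cap O_S=\{F_S=0\}$ is non-singular in $O_S$, i.e.\ $(\partial_iF_S)_{i\in S}$ does not vanish there. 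Hence some $G_i\neq0$ at $[Z]$, so $[Z]$ is not a base point. Thus the $G_i$ have no common zero on $\overline X$, the rational map above is a genuine morphism, and it must coincide with $\overline\gamma_X$.

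Consequently $\overline\gamma_X$ is the morphism associated with a base-point-free subsystem of $|\mathcal O_{\overline X}(d)|$, so $\overline\gamma_X^*\mathcal O_{\CP^{n-1}}(1)\cong\mathcal O_{\overline X}(d)$. For every curve $C\subset\overline X$ we have $\mathcal O_{\overline X}(d)\cdot C=d\deg C>0$, whereas a curve contracted by $\overline\gamma_X$ would satisfy $\overline\gamma_X^*\mathcal O(1)\cdot C=0$. Therefore no curve is contracted; as $\overline X$ is complete, $\overline\gamma_X$ is proper with finite fibres, hence finite. As a consistency check, $\deg\overline\gamma_X=\int_{\overline X}(dH)^{n-1}=d^{n-1}\deg\overline X=d^n=\Vol_n(d\,\Delta_n)$, matching the degree formula \eqref{eqn:degree}.

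The main obstacle is the base-point-freeness at the toric boundary: one must verify carefully that ``non-singular in the orbit $O_S$'' translates into the non-vanishing of the partials $(\partial_iF)_{i\in S}$, and that these are correctly computed by restricting $F$ to the associated coordinate subspace and differentiating. This toric-local analysis is the whole content of the argument; granted it, finiteness is formal.
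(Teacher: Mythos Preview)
Your argument is correct, and it takes a genuinely different route from the paper. The paper's proof is very short: by Lemma~\ref{lem:restrictionLogGauss}, the image under $\overline\gamma_X$ of each toric divisor $\overline X\cap\{Z_i=0\}$ lies in a fixed hyperplane $H_i\subset\CP^{n-1}$; since the $n+1$ hyperplanes $H_0,\dots,H_n$ have empty common intersection, any given point $x\in\CP^{n-1}$ misses some $H_i$, so the fibre $\overline\gamma_X^{-1}(x)$ avoids the coordinate hyperplane $\{Z_i=0\}\subset\CP^n$ and hence cannot be positive-dimensional. Your approach instead writes $\overline\gamma_X$ explicitly as the morphism associated with the sections $G_i=Z_i\partial_iF$ of $\mathcal O_{\overline X}(d)$, checks base-point-freeness directly from torical non-singularity on each orbit, and deduces finiteness from the ampleness of $\mathcal O_{\overline X}(d)$. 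The two arguments are closely related under the hood (the vanishing of $G_i$ along $\{Z_i=0\}$ is exactly what forces the image of that divisor into $H_i$), but they package the conclusion differently: the paper's version is more elementary and uses only the hyperplane-intersection property of projective space, while yours supplies additional information for free, namely the identification $\overline\gamma_X^{\,*}\mathcal O(1)\cong\mathcal O_{\overline X}(d)$ and the resulting degree computation $d^n=\Vol_n(d\Delta_n)$.
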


We then investigate the existence of \textit{torically 
  maximal hypersurfaces}.
Given a real algebraic subvariety   $X$ of  a
complex toric variety, we denote
 by $\R X$
the real part of $X$.
We say that a
real algebraic map $f:X\to Y$ between
$2$ real algebraic varieties
is
\emph{almost totally real}
if ${f}^{-1} (x) \subset \R  X$
for any $x\in  \R Y\setminus  S$ where $S$ is some
subspace of $f(X)\cap\R Y$ of positive codimension.
If $S$ is empty, then the map $f$ is said to be \emph{totally real}.

\begin{defi} \label{def:H-surface}
A torically non-singular real algebraic
hypersurface $X$ of $(\C^*)^n$ is
said to be \emph{almost torically maximal}
if the map
$\gamma_X$ is
almost
totally real.

A torically non-singular real algebraic
hypersurface $X$ of $(\C^*)^n$ is
said to be \emph{torically maximal}
if $\overline X$ is non-singular and the map
$\overline \gamma_X$ is  totally real.
\end{defi}

\begin{rem}
Note that 
the above definition of
(almost)
torically maximal is axiomatizing \cite[Proposition 26]{Mik-amoebas} rather than
making use of \cite[Definition 10]{Mik-amoebas}.
In particular, we do not require
$X$ or $\overline X$
 to be maximal in the sense the Smith-Thom
inequality (see for example \cite{BenRis} for the Smith-Thom inequality).
\end{rem}

Any
almost
torically maximal hypersurface
is
torically maximal if $n\le 2$.
When $n=1$, the variety
$X$ is
torically maximal if and only if all roots of
$P(z_1)$ in $\C^*$ are simple and real.
When $n=2$, the real curve $X$ is
torically maximal if and only if $\R
X$
is a simple Harnack curve, see \cite[Lemma 2.2 and Theorem 3.5]{PasRis11}.
It is proved in \cite{Mik-00} that the topological type of the pair
$((\R^*)^2, \R X)$ is uniquely determined by $\Delta(X)$
when $X$ is a simple Harnack curve (see also
\cite{Bru14b} for an alternative proof).

The  logarithmic
Gau{\ss} map of a hyperplane in $\CP^n$ has  degree 1, therefore any
 real hyperplane  is
almost
torically maximal,
and hence torically maximal by
Theorem 
\ref{thm:finiteLog}.
The next theorem 
asserts that this is the only possible example of
almost
torically projective 
maximal hypersurfaces
as soon as  $n\ge 3$.
\begin{thm} \label{thm:noProjHarnack}
  Let $n\ge 3$ and $X\subset (\C^*)^n$ be an
almost
  torically maximal 
  projective
  hypersurface.
  Then
  $\overline X$ is a hyperplane.
\end{thm}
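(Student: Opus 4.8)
The plan is to leverage Theorem~\ref{thm:finiteLog} together with the degree formula \eqref{eqn:degree} to reduce the problem to a statement about the Newton polytope $\Delta(X)$. The key idea is that torical maximality forces the fibers of $\overline\gamma_X$ over generic \emph{real} points of $\R P^{n-1}$ to lie entirely in $\R\overline X$, and moreover to be \emph{real points with the correct sign behaviour}. Since the degree of $\overline\gamma_X$ equals $\Vol_n(\Delta(X))$, I would first argue that almost total reality imposes that all of these $\Vol_n(\Delta(X))$ preimages are real and distinct over a generic real direction, which in turn controls the number of real branches of $\R\overline X$ and the way they wrap around the torus.

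First I would set up the real structure carefully: a real point $y\in\R P^{n-1}$ corresponds to a logarithmic direction (a real cotangent vector on the torus), and a preimage $x\in\gamma_X^{-1}(y)$ is a point of $X$ whose logarithmic tangent hyperplane has that prescribed real normal direction. Restricting to a generic real pencil of directions, the critical points of the corresponding family of logarithmic linear functions on $\R X$ must account for the full degree $\Vol_n(\Delta(X))$ because of almost total reality. I would then compare this with a Morse-theoretic count: the number of critical points of a generic linear function on each connected component of $\R X$ (viewed in the universal cover $\R^n$ of $(\R^*)^n$ via the logarithm) is bounded by the topology of that component. The principle of torical maximality, as in the curve case, should force $\R X$ to be maximally ``spread out'' — each orbit boundary of $\Tor(X)=\C P^n$ meets $\R\overline X$ in the maximal possible way, and no two real branches can cross in the interior.

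The crucial reduction is then combinatorial: I expect that almost total reality of $\overline\gamma_X$ forces $\Delta(X)$ to have lattice volume equal to $1$, i.e.\ to be a unimodular simplex, from which it follows that $\overline X$ is a hyperplane. To prove $\Vol_n(\Delta(X))=1$ I would look at the behaviour of $\overline\gamma_X$ on the boundary strata of $\overline X$ corresponding to facets of $\Delta(X)$: each facet $F$ yields a torically non-singular hypersurface in a lower-dimensional torus whose own logarithmic Gau{\ss} map is (almost) totally real, giving an inductive structure. The base of the induction is the curve case $n=2$, where torical maximality means $\R X$ is a simple Harnack curve and the restriction to each toric divisor is determined by $\Delta(X)$. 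The genuinely new phenomenon in dimension $n\ge 3$ is the rigidity coming from the Gau{\ss} map being a map to $\C P^{n-1}$ of dimension $n-1\ge 2$: I would exploit that the generic fiber is a complete intersection of logarithmic ``Gau{\ss} conditions'' and that total reality of these finite fibers, combined with finiteness from Theorem~\ref{thm:finiteLog}, is incompatible with $\Vol_n(\Delta(X))>1$ unless the branch locus and ramification of $\overline\gamma_X$ degenerate in a way that only the simplex allows.

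The main obstacle I anticipate is controlling the ramification and the real structure of $\overline\gamma_X$ over the \emph{boundary} $\R P^{n-1}$ where almost total reality is allowed to fail (the positive-codimension subspace $S$ in the definition). The argument must show that even permitting failure on $S$, the global count over generic real directions still forces the volume down to $1$; this requires a genuinely $n$-dimensional transversality and degree argument rather than the one-parameter (pencil) reasoning that suffices when $n=2$. Concretely, the delicate point will be ruling out the possibility that extra complex-conjugate pairs of preimages appear over all of $\R P^{n-1}\setminus S$ while the map still looks ``totally real enough'' on each generic real line — it is precisely the jump in the dimension of $\R P^{n-1}$ that closes this loophole, and making that rigorous is where the heart of the proof lies.
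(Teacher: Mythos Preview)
Your proposal is more a collection of hopes than a proof, and it misses the actual mechanism that makes the argument work. The paper does not proceed by a global Morse-theoretic count or by directly bounding $\Vol_n(\Delta(X))$ from the reality of fibres. The key ingredient you are missing is a theorem of Kummer (Theorem~\ref{thm:covering}): a totally real morphism between non-singular real varieties of the same dimension is \emph{unramified over the real locus}. Combined with Theorem~\ref{thm:finiteLog} and Corollary~\ref{prop:maxfinite-totreal} (which upgrades ``almost'' to genuine toric maximality in the projective case), this forces the restriction $\R\overline X\to\R P^{n-1}$ of $\overline\gamma_X$ to be a covering map. For $n\ge 3$ the only connected covers of $\R P^{n-1}$ are $\R P^{n-1}$ itself and $S^{n-1}$, so every component of $\R\overline X$ is one of these two (Corollary~\ref{cor:SnRPn}). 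This topological rigidity is what replaces your vague ``Morse count'' and is the single idea that makes the higher-dimensional argument go through; without it, your ``genuinely $n$-dimensional transversality and degree argument'' has no content.

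The second missing piece is the role of the boundary curves. For $n=3$, each facet of $\Delta(X)$ gives a simple Harnack curve, and the \emph{outer circle} of each such curve (the component meeting the toric boundary) lies in a single component $\mathcal C$ of $\R\overline X$. The paper then uses the explicit formula $\deg(\overline\gamma_X|_{O(Z)})=\sum_{E}\mathrm{Length}(E)-2$ for the degree of $\overline\gamma_X$ on an outer circle, together with the covering degree of $\mathcal C\to\R P^2$ (which is $1$ or $2$), to force every facet to be a triangle with unit edges and $\Delta(X)$ to be a tetrahedron. The case $n>3$ follows immediately by slicing with a $3$-dimensional torus orbit. Your inductive sketch on facets is in the right spirit, but the concrete input --- the perimeter formula for Harnack curves and the intersection pattern of outer circles in $\mathcal C$ --- is absent, and nothing in your outline would produce the conclusion that $\Delta(X)$ is a simplex with unit edges.
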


 In the case of
 torically maximal hypersurfaces, the previous  theorem
can be extended to any Newton polytope.
\begin{thm} \label{thm:noGenHarnack}
  Let $n\ge 3$ and $X\subset (\C^*)^n$ be a
  torically 
maximal hypersurface.
Then $\Tor(X)= \C P^n$ and  $\overline X$ is a hyperplane.
\end{thm}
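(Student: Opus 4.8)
The plan is to reduce Theorem~\ref{thm:noGenHarnack} to Theorem~\ref{thm:noProjHarnack}. First observe that a torically maximal hypersurface is in particular \emph{almost} torically maximal: if $\overline\gamma_X$ is totally real on $\overline X$, then its restriction $\gamma_X$ is totally real on $X$, hence almost totally real. Consequently, as soon as we know that $\Tor(X)=\CP^n$, Theorem~\ref{thm:noProjHarnack} applies verbatim and forces $\overline X$ to be a hyperplane. The entire additional content of Theorem~\ref{thm:noGenHarnack} is therefore the statement $\Tor(X)=\CP^n$, i.e.\ that the normal fan of $\Delta(X)$ is the fan of $\CP^n$; equivalently that $\Delta(X)$ is, up to an element of $GL_n(\Z)$ and a translation, a dilate $d\,\Delta_n$ of the standard simplex.

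The main engine I would use is \textbf{facial maximality}. Let $\Gamma$ be a facet of $\Delta(X)$, with associated toric divisor $D_\Gamma\subset\Tor(X)$ and facet polynomial $P_\Gamma$, and let $X_\Gamma\subset(\C^*)^{n-1}$ be the hypersurface it defines in the torus of $D_\Gamma$. Since $\overline X$ is non-singular and transverse to every torus orbit, $\overline{X_\Gamma}=\overline X\cap D_\Gamma$ is again non-singular. The key computation is that the restriction of $\overline\gamma_X$ to $\overline{X_\Gamma}$ factors through the hyperplane $\CP^{n-2}\subset\CP^{n-1}$ cut out by the facet normal and there agrees with $\overline\gamma_{X_\Gamma}$: on $D_\Gamma$ only the monomials of $P_\Gamma$ survive, and their logarithmic gradients lie in the affine span of $\Gamma$. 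Total reality of $\overline\gamma_X$ thus descends to total reality of $\overline\gamma_{X_\Gamma}$, so every facet hypersurface is torically maximal of dimension $n-1$. Iterating, every facial hypersurface attached to a face of $\Delta(X)$ is torically maximal; in particular the edges yield univariate facet polynomials all of whose roots are simple and real, of number equal to the lattice length of the edge.

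With facial maximality in hand I would argue by induction on $n$, with base case $n=3$. For $n\ge 4$ each facet $\Gamma$ is an $(n-1)$-dimensional torically maximal hypersurface, so by induction (the theorem being available in dimension $n-1\ge 3$) $\overline{X_\Gamma}$ is a hyperplane and $\Gamma$ is a unimodular simplex. \emph{Here lies the genuine obstacle.} Knowing that every facet is a unimodular simplex does not force $\Delta(X)$ to be a simplex: the octahedron is a lattice polytope all of whose facets are unimodular triangles, yet its normal fan is not that of $\CP^3$. Moreover Theorem~\ref{thm:noProjHarnack} cannot be invoked to exclude such a polytope, since it presupposes $\Tor(X)=\CP^n$. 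In the base dimension the difficulty is complementary: the $2$-faces of $\Delta(X)$ are simple Harnack curves with \emph{arbitrary} Newton polygons, so they impose no constraint, and even a dilated tetrahedron $d\,\Delta_3$ has all facets (dilated triangles) realizable by simple Harnack curves. Thus the facewise data neither pins the combinatorial type nor excludes $d\ge 2$, and a genuinely global argument is required.

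For this crux I would exploit the \emph{global} total reality of $\overline\gamma_X$ on the smooth $(n-1)$-fold $\overline X$, by the same circle of ideas powering Theorem~\ref{thm:noProjHarnack} together with the smoothness of $\overline X$ near the torus fixed points. Concretely, the aim is to show that total reality, propagated through all faces, forces each vertex cone of $\Delta(X)$ to be the cone of $d\,\Delta_n$: the edge facet polynomials control the lattice lengths of the edges emanating from a vertex, while total reality of $\overline\gamma_X$ near the corresponding fixed point, combined with the strict log-convexity imposed on $\R X$, rules out any facet direction other than $e_1,\dots,e_n,-(e_1+\cdots+e_n)$ up to $GL_n(\Z)$, and in particular excludes octahedron-type fans. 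Once every vertex cone is identified with the $\CP^n$ cone, $\Delta(X)$ is itself $d\,\Delta_n$, whence $\Tor(X)=\CP^n$ and Theorem~\ref{thm:noProjHarnack} finishes the proof by forcing $d=1$. I expect that isolating the precise three-dimensional total-reality input that settles the base case $n=3$, and checking that it propagates through the induction, is where essentially all of the work will lie.
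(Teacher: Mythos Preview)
Your skeleton is correct: facial maximality (Lemma~\ref{lem:restrictionLogGauss}) plus induction on $n$ is exactly the paper's architecture, and you have located the genuine obstacle --- unimodular simplicial facets do not by themselves force $\Delta(X)$ to be a simplex. But what you offer at the crux (``strict log-convexity'', ``total reality near the fixed points'', identifying ``vertex cones'') is a placeholder, not an argument, and it does not correspond to the mechanism that actually works.

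The missing global input is Theorem~\ref{thm:covering}: a totally real morphism between smooth varieties is unramified along the real locus, so $\R\overline X\to\RP^{n-1}$ is a covering map and every component of $\R\overline X$ is $S^{n-1}$ or $\RP^{n-1}$, with $\Vol_n(\Delta(X))=2k+l$ (Corollary~\ref{cor:SnRPn}). One then isolates the \emph{outer component} $\mathcal C$ containing all boundary pieces $\R\overline X\cap T_F$ and argues with $\Z/2\Z$ intersection homology inside $\mathcal C$. For $n=3$, the degree of $\overline\gamma_X$ on outer circles of the boundary Harnack curves forces $\mathcal C\cong\RP^2$ and $\Delta(X)$ to be a tetrahedron with unit edges (Proposition~\ref{prop:CisRP2}); then Smith--Thom combined with Khovanskii's Euler-characteristic formula bounds the facet areas by~$1$, and Lemma~\ref{lem:simplexUni} (using smoothness in dimension~$1$) yields a unimodular simplex. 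For $n\ge 4$, induction makes each $\R\overline X\cap T_F\cong\RP^{n-2}$; their pairwise $\Z/2\Z$ intersection classes in $\mathcal C\cong\RP^{n-1}$ are non-trivial in $H_{n-3}$, which forces every pair of facets of $\Delta(X)$ to share a codimension-$2$ face, hence $\Delta(X)$ is a simplex, and Lemma~\ref{lem:simplexUni} finishes. The argument is topological (covering spaces, $\Z/2\Z$ homology, Smith--Thom), not convex-geometric, and in particular your detour through Theorem~\ref{thm:noProjHarnack} to get $d=1$ is unnecessary: once $\Delta(X)$ is a simplex with unimodular facets, Lemma~\ref{lem:simplexUni} already gives $\Vol_n(\Delta(X))=1$.
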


\begin{rem}
Note that Theorem \ref{thm:noProjHarnack} can be deduced as a
corollary of
Theorem  \ref{thm:finiteLog} and Theorem \ref{thm:noGenHarnack}.
 Nevertheless, its direct proof is quite simple, so
we prove it independently of Theorem \ref{thm:noGenHarnack}.
\end{rem}

Let us make some comments about Theorems \ref{thm:noProjHarnack} and
\ref{thm:noGenHarnack} and
further generalisations of simple Harnack curves.
First,
%
we
do not know
whether   there  exist almost 
 torically
 maximal hypersurfaces $X\subset (\C^*)^n$
 which are not torically maximal.
 However,  Section \ref{sec:example} provides an example of a singular hypersurface for which the logarithmic 
 Gau{\ss} map is almost totally real but not totally real. 
Therefore the assumption of smoothness of $\overline X$  (which is a part of the definition of toric maximality) is essential
 in Theorem \ref{thm:noGenHarnack}.

 Next, Theorems \ref{thm:noProjHarnack} and \ref{thm:noGenHarnack} may be
 a hint 
 that the direct generalisation of toric maximality
 proposed in
 \cite[Section 3.4]{Mik-amoebas} in dimension at least $3$
 can be weakened.
 For example, relaxing the smoothness assumption on $X\subset  (\C^*)^n$ in
 Definition \ref{def:H-surface} may
 produce  meaningful objects (see  \cite{Lang15} for the case of
generalised simple
   Harnack curves). 
   Additionally, it is worthwhile to consider
real subvarieties of higher codimension.  
There is a
natural
generalisation of the logarithmic Gau{\ss}
map where the target is now a  Grassmannian,
and also a generalisation of (almost) torically maximal real algebraic varieties of any codimension.
Products of torically maximal hypersurfaces 
give examples 
of  torically maximal subvarieties of codimension $> 1$.
So far  we do not know of other examples.  



\medskip



\medskip
{\bf Acknowledgment: }We are grateful to Benoît Bertrand, Christian
Haase, Ilia
  Itenberg,   Michael Joswig, Mario Kummer, and Lucia L\'opez de Medrano
 for helpful discussions.

\section{Properties of the logarithmic Gau{\ss} map}\label{sec:toricallymaxcoveringmap}

If $X \subset (\CC^*)^{n}$ is a
torically non-singular
hypersurface, then  $\overline{X} \cap Y$ is by definition also a
torically 
non-singular hypersurface for any  torus orbit
$Y$
of $\Tor(X)$. In particular the
logarithmic Gau\ss \  map
$\gamma_{\overline{X}\cap Y}$
 is
 well defined.
 The following
lemma is straightforward. 
\begin{lem}\label{lem:restrictionLogGauss}
Let $X \subset (\CC^*)^{n}$ be a torically non-singular hypersurface.
Then for any torus orbit
$Y$
of $\Tor(X)$,  the logarithmic Gau\ss \  map  of  $\overline{X} \cap
Y$ 
coincides with the restriction of ${\overline{\gamma}}_X$ to
$\overline{X} \cap Y$.
Furthermore, if the face  of $\Delta(X)$ corresponding to $Y$
is parallel to a
linear space 
$L \subset \R^n$, then the image of
the restriction of 
${\overline{\gamma}}_X$ to $\overline{X} \cap Y$ 
 lies in the projectivisation of
$L \otimes\C$ in $\CP^{n-1}$.  
\end{lem}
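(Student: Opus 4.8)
The plan is to describe $\overline X\cap Y$ by the \emph{face polynomial} of $P$ and to compute the extension $\overline\gamma_X$ directly along this stratum. Let $F$ be the face of $\Delta(X)$ corresponding to the orbit $Y$, let $L\subset\R^n$ be the linear space parallel to $F$, and set $k=\dim F=\dim L$. By standard toric geometry, $\overline X\cap Y$ is the hypersurface of the orbit torus $Y\cong(\C^*)^k$ cut out by the truncation $P^F=\sum_{a\in F}c_a z^a$ of $P=\sum_a c_a z^a$ to the face $F$. The extension of $\overline\gamma_X$ to $\overline X\cap Y$ exists by \cite[Section 3.2]{Mik-00} recalled above, and its value there is the projectivised limit of the logarithmic gradient, which is computed from $P^F$. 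It is convenient to pass to logarithmic coordinates $u_j=\log z_j$, in which $z_j\partial_{z_j}=\partial_{u_j}$ and $\gamma_X=[\partial_{u_1}\widetilde P:\cdots:\partial_{u_n}\widetilde P]$ with $\widetilde P(u)=\sum_a c_a e^{\langle a,u\rangle}$.

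Both assertions are equivariant under monomial (that is, $\mathrm{GL}_n(\Z)$) changes of coordinates: such a change acts as a linear automorphism of the target $\CP^{n-1}$ and carries $\P(L\otimes\C)$ to the projectivisation of the image of $L$. I may therefore assume the lattice basis is adapted to $L$, so that $L=\mathrm{span}(e_1,\dots,e_k)$. Then every exponent $a\in F$ has the same last $n-k$ coordinates $(b_{k+1},\dots,b_n)$, and $P^F$ factors as $P^F=z_{k+1}^{b_{k+1}}\cdots z_n^{b_n}\,Q(z_1,\dots,z_k)$ for a Laurent polynomial $Q$ in the first $k$ variables alone. Next I would compute the logarithmic derivatives of $P^F$: for $j\le k$ the monomial prefactor is constant in $z_j$, so $z_j\partial_{z_j}P^F=z_{k+1}^{b_{k+1}}\cdots z_n^{b_n}\,z_j\partial_{z_j}Q$, whereas for $j>k$ one gets $z_j\partial_{z_j}P^F=b_j\,P^F$.

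The crux is that on $\overline X\cap Y=\{P^F=0\}$ the second group of components vanishes identically, while the first group is, up to the common nonzero monomial factor, the logarithmic gradient of $Q$. Hence on $\overline X\cap Y$
\[
\overline\gamma_X=[\,z_1\partial_{z_1}Q:\cdots:z_k\partial_{z_k}Q:0:\cdots:0\,].
\]
This is simultaneously the logarithmic Gau\ss\ map $\gamma_{\overline X\cap Y}$ of $\{Q=0\}$ in the orbit torus $Y\cong(\C^*)^k$, proving the first assertion, and a point of $\P(L\otimes\C)=\P(\mathrm{span}_\C(e_1,\dots,e_k))\subset\CP^{n-1}$, proving the second. (Coordinate-freely, writing $a=a_0+\ell_a$ with $a_0\in F$ fixed and $\ell_a\in L$ gives $\nabla_u\widetilde{P^F}=a_0\,\widetilde{P^F}+\sum_{a\in F}c_a\ell_a e^{\langle a,u\rangle}$, whose first term dies on $\{\widetilde{P^F}=0\}$, leaving a vector in $L\otimes\C$.)

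The only genuinely delicate point is the identification of $\overline X\cap Y$ with $\{P^F=0\}$ and the verification that the algebraic extension $\overline\gamma_X$ of \cite[Section 3.2]{Mik-00} is indeed given on this stratum by the logarithmic gradient of $P^F$; granting this, the computation above is routine, which is why the statement is labelled straightforward.
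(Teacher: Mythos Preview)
The paper does not give a proof of this lemma at all; it merely declares it ``straightforward'' and moves on. Your argument is a correct and natural way to supply the omitted details: reducing via a $\mathrm{GL}_n(\Z)$ change of coordinates to the case $L=\mathrm{span}(e_1,\dots,e_k)$, factoring the face polynomial as a monomial times $Q(z_1,\dots,z_k)$, and observing that the last $n-k$ logarithmic derivatives are multiples of $P^F$ and hence vanish on $\overline X\cap Y$. The coordinate-free remark at the end is a nice touch. The one point you flag---that the algebraic extension $\overline\gamma_X$ of \cite{Mik-00} is computed on the stratum $Y$ by the logarithmic gradient of the face polynomial $P^F$---is indeed the only thing requiring outside input, and it is exactly what the construction in \cite[Section 3.2]{Mik-00} provides; so your proof is complete modulo that citation, which is the same citation the paper itself relies on.
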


\begin{proof}[Proof of Theorem \ref{thm:finiteLog}]
   Lemma \ref{lem:restrictionLogGauss} implies that
 for any $x\in \CP^{n-1}$, the fibre
$\overline{\gamma}_X^{-1}(x)$ is disjoint from at least $1$ toric
divisor of $\CP^n$, which is a hyperplane.
Since any positive-dimensional subvariety of $\CP^n$
intersects any hyperplane, 
all fibres
$\overline{\gamma}_X^{-1}(x)$ have to be 
a 
finite collection of points. 
\end{proof}

\begin{rem}
  When $X$ is
torically non-singular, but not necessarily projective, the above argument can be used to show that any curve contained in the fibre $\overline{\gamma}_X^{-1}(x)$ 
must be contained in the closure of
 a subtorus translate of
 $(\C^*)^n$.
\end{rem}

Theorem \ref{thm:finiteLog} immediately implies the following.
 \begin{cor}\label{prop:maxfinite-totreal}
If $X \subset (\C^*)^n$ is an almost torically
maximal  projective hypersurface,
then $X$ is  torically maximal.
 \end{cor}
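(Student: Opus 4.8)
The plan is to establish the two defining conditions of toric maximality: that $\overline X$ is non-singular and that $\overline\gamma_X$ is totally real. The first is automatic once $\Tor(X)=\CP^n$. Indeed, torical non-singularity says that the trace $\overline X\cap Y$ is smooth inside $Y$ for every orbit $Y$ of $\CP^n$; writing $\overline X=\{P=0\}$ in local coordinates adapted to $Y$, this means that some directional derivative of $P$ is nonzero along $\overline X\cap Y$, whence $dP\ne 0$ there. As the orbits cover $\overline X$ and $\CP^n$ is smooth, $\overline X$ is non-singular. All the remaining work is to upgrade the almost total reality of $\gamma_X$ to the total reality of $\overline\gamma_X$, and the one new ingredient that makes this possible is the finiteness of $\overline\gamma_X$ supplied by Theorem \ref{thm:finiteLog}.

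I would introduce the locus
\[
R=\{\,x\in\R\CP^{n-1}\ :\ \overline\gamma_X^{-1}(x)\subset\R\overline X\,\}
\]
and first prove that it is closed. Here finiteness enters: by Theorem \ref{thm:finiteLog}, $\overline\gamma_X\colon\overline X\to\CP^{n-1}$ is a finite branched covering of degree $d=\Vol_n(\Delta(X))$ between compact complex varieties, so its fibres, counted with multiplicity, depend continuously on the base point as elements of the symmetric product $\mathrm{Sym}^d\overline X$. Hence, if $x_k\to x_0$ with $x_k\in R$, every point of the fibre $\overline\gamma_X^{-1}(x_0)$ is a limit of points of the fibres $\overline\gamma_X^{-1}(x_k)\subset\R\overline X$; since $\R\overline X$ is closed in $\overline X$, the entire fibre over $x_0$ lies in $\R\overline X$, i.e.\ $x_0\in R$.

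It then remains to show that $R$ is dense, for then the closed set $R$ equals $\R\CP^{n-1}$. The only subtlety is the boundary $\overline X\setminus X$, on which almost total reality gives no information, and I would control it with Lemma \ref{lem:restrictionLogGauss}. For each proper face $F$ of $\Delta(X)$, with associated orbit $Y$ and parallel linear space $L_F\subset\R^n$, that lemma confines the image of $\overline\gamma_X|_{\overline X\cap Y}$ to the projective subspace $\P(L_F\otimes\C)$ of $\CP^{n-1}$, which is of positive codimension since $\dim F\le n-1$. Putting $S'=S\cup\bigcup_{F}\P(L_F\otimes\C)$, with $S$ the exceptional set from almost total reality, the set $S'$ still has positive codimension. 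For $x\in\R\CP^{n-1}\setminus\R S'$ the fibre $\overline\gamma_X^{-1}(x)$ meets no boundary orbit, so it coincides with $\gamma_X^{-1}(x)$, which lies in $\R X\subset\R\overline X$ because $x\notin S$. Thus $R$ contains the dense set $\R\CP^{n-1}\setminus\R S'$, so $R=\R\CP^{n-1}$ and $\overline\gamma_X$ is totally real.

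The crux --- and the precise point at which Theorem \ref{thm:finiteLog} is indispensable --- is the closedness of $R$. It is exactly the finiteness of all fibres that lets the limiting argument run uniformly and forbids non-real points from materialising over the exceptional locus $S'$: were some fibre positive-dimensional, the real points over nearby $x$ could accumulate onto such a fibre without exhausting it, and reality could be lost in the limit. The rest is routine bookkeeping, namely verifying that each $\P(L_F\otimes\C)$ is of positive codimension and that adjoining finitely many of them to $S$ leaves a nowhere-dense exceptional set.
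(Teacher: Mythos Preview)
Your argument is correct and is precisely the natural unpacking of the paper's one-line claim that Theorem~\ref{thm:finiteLog} ``immediately implies'' the corollary; the paper gives no further proof. The two ingredients you isolate---finiteness of $\overline\gamma_X$ forcing the totally real locus $R\subset\R\CP^{n-1}$ to be closed, and almost total reality (together with Lemma~\ref{lem:restrictionLogGauss} pushing the boundary strata into proper linear subspaces) forcing $R$ to be dense---are exactly what is meant, and your observation that smoothness of $\overline X$ follows from torical non-singularity once $\Tor(X)=\CP^n$ is the standard local check.
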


The  following theorem about totally real morphisms is used to restrict the topology of $\R \overline{X}$. Note that in \cite{Kummer} a totally real morphism is called real fibered.

\begin{thm}\cite[Theorem 2.19]{Kummer}\label{thm:covering}
  Let $X$ and $Y$ be   
  non-singular real algebraic varieties of the
  same dimension, and
  let $\phi: X \to Y$ be a  totally real morphism.
  Then $d_x \phi: T_x\R X \to T_{\phi(x)}\R Y$ is an isomorphism for all $x \in \R X$. 
\end{thm}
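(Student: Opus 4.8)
The plan is to reduce the real statement to a complex one and then localize. At a real point $x\in\R X$ the complex tangent space $T_xX$ is the complexification of the real tangent space $T_x\R X$, and since $\phi$ is a real morphism the complex differential $d_x^{\C}\phi$ is the complexification of $d_x\phi$. As $X$ and $Y$ have the same dimension $n$, both $T_x\R X$ and $T_{\phi(x)}\R Y$ are real $n$-dimensional, so $d_x\phi$ is an isomorphism if and only if $d_x^{\C}\phi$ is, that is, if and only if the holomorphic Jacobian of $\phi$ does not vanish at $x$. Thus it suffices to prove that $\phi$ is unramified, hence a local biholomorphism, at every real point.

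Next I would localize. Fix $x_0\in\R X$ and first argue that $x_0$ is isolated in its fibre $\phi^{-1}(\phi(x_0))$: otherwise this fibre would contain a positive-dimensional complex subvariety through $x_0$, which by total reality would be forced to lie in $\R X$; but the totally real locus $\R X$ of the complex $n$-manifold $X$ contains no complex subvariety of positive dimension, since at a smooth point such a subvariety would have a $J$-invariant tangent space inside the totally real space $T\R X$. Consequently the germ of $\phi$ at $x_0$ is a finite holomorphic map germ $(\C^n,0)\to(\C^n,0)$, equivariant for complex conjugation, of some local degree $m\ge 1$; it is a local biholomorphism exactly when $m=1$, so I assume $m\ge 2$ and seek a contradiction.

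The contradiction should come from the real branched-covering structure. Over a small polydisc the germ is an $m$-sheeted branched cover, and total reality says that every real regular value has all $m$ of its nearby preimages real; hence the local real fibre of $\phi$ has constant cardinality $m$ over the real regular values near $y_0=\phi(x_0)$. On the other hand, $m\ge 2$ forces real ramification to genuinely change this count: modelled on the one-variable germ $z\mapsto z^m$, for which the $m$-th roots of a suitably chosen real number fail to all be real as soon as $m\ge 2$, the real fibre cardinality must drop when the real value crosses the branch locus. This contradicts the constancy above and forces $m=1$.

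The hard part will be making this last step rigorous in $n$ variables, that is, reducing the local branched cover to the one-dimensional picture and controlling the real branch locus (which need not separate $\R Y$ a priori). The clean way is to slice by a generic real line, or real-analytic arc, through $y_0$ and restrict $\phi$ to the preimage of that line, reducing to a real one-variable branched cover to which the $z\mapsto z^{m'}$ analysis with $m'\ge 2$ applies directly; one must then check that a generic slice still meets the ramification with $m'\ge 2$ and that the non-real roots it produces are genuine non-real points of the original fibre. Once this is in place the theorem follows, and in the paper's setting the global finiteness of $\phi=\overline\gamma_X$ needed to pass from germs to the honest fibre is already guaranteed by Theorem \ref{thm:finiteLog}.
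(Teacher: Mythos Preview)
Your approach is essentially the paper's: both reduce to the one-variable model $z\mapsto z^d$ by slicing the target with a real line through $\phi(x_0)$ and analyzing $\phi$ on the (normalized) preimage curve. The difference lies in how the slice is chosen. You propose a \emph{generic} real line and then have to argue that the restricted map is still ramified with some $m'\ge 2$ and that the resulting non-real roots are honest non-real points of the original fibre; you correctly flag this as the hard part. The paper sidesteps it entirely by choosing a \emph{specific} real line $L$ with
\[
T_{\phi(x_0)}\R L\ \cap\ d_{x_0}\phi\bigl(T_{x_0}\R X\bigr)=\{0\},
\]
which is possible exactly because $d_{x_0}\phi$ is assumed non-surjective. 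Then for any real point $\tilde x$ of the normalization $\tilde C$ of $C=\phi^{-1}(L)$ lying over $x_0$, the chain rule gives $d_{\tilde x}(\phi\circ\pi)=d_{x_0}\phi\circ d_{\tilde x}\pi$, whose image lands in $T_{\phi(x_0)}L\cap d_{x_0}\phi(T_{x_0}X)=0$. Thus the totally real map $\phi\circ\pi:\tilde C\to L$ of smooth curves is ramified at the real point $\tilde x$, and the one-variable case yields the contradiction immediately---no genericity, no branch-locus crossing, no fibre-cardinality bookkeeping.

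Two smaller remarks. Your argument that $x_0$ is isolated in its fibre (a positive-dimensional complex subvariety cannot sit inside the totally real locus $\R X$) is correct and pleasant, but unnecessary: the paper's argument does not use it. And your final sentence invoking Theorem~\ref{thm:finiteLog} for global finiteness is beside the point; Theorem~\ref{thm:covering} is a purely local statement about the differential and makes no use of finiteness of $\phi$.
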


We outline the proof of the above theorem for completeness, referring the reader to \cite{Kummer} for details.
Since it is a local statement, we may assume that both $X$ and $Y$ are
real open
neighbourhoods of 0 in  $\C^n$.
Firstly, notice that the statement is true when $X$ and $Y$ are
$1$-dimensional:
if a real map    $\phi: (\CC, 0) \to (\CC, 0)$ is ramified at $0$,
then $\phi$ is  locally given by $z \mapsto z^d$ for
$d\ge 2$
which is clearly not totally real. 

For $n>1$, if $d_x\phi$ is not injective for some $x\in\R X$, then
choose a  real line $L\subset\C^n$
such that $\phi(x)\in L$ and $T_{\phi(x)}\R L\cap d_x \phi(T_x \R X ) = \{0\}$.
Consider the
real algebraic curve $C = \phi^{-1}(L) \subset X$, and its normalisation $\pi: \tilde{C} \to C$. The
composition $\phi \circ \pi : \tilde{C} \to L$ is also a totally real
map.
Since the
theorem is true for maps between curves, this map is unramified over
the real locus.
However, for any  point $\tilde{x} \in \R \tilde{C}$ such that $\pi (\tilde{x} ) = x \in \R C \subset \R X$, the differential satisfies $d_{\tilde x} (\phi \circ \pi)  = d_x \phi \circ d_{\tilde{x}} \pi$. Therefore the image of $d_{\tilde x} (\phi \circ \pi)$ is zero by the assumption that $T_{\phi(x)}\R L \cap d_x \phi(T_x \R X ) = \{0\}$. This gives a contradiction and the theorem follows.

\medskip
It follows from Theorem \ref{thm:covering} that
the logarithmic Gau{\ss} map induces a covering map
$\R\overline{X}\to \RP^{n-1}$ if $X\subset (\C^*)^n$ is a
torically
maximal  hypersurface.
For $n>1$, 
 there are only $2$ connected coverings of $\R P^n$, namely $\R P^{n}
\to \R P^{n}$ of degree $1$ and  $S^{n} \to \R P^{n}$ of degree $2$.
Hence 
the degree of 
the covering map $\R\overline{X}\to \RP^{n-1}$ 
is determined by the topology of $\R\overline{X}$ when $n\ge 3$, and  
Formula (\ref{eqn:degree})
implies
the following.

\begin{cor}\label{cor:SnRPn}
  Let $X \subset (\CC^*)^{n}$ be a
  torically  maximal
hypersurface
with $n\ge 3$. Then $\R \overline{X}$ is a disjoint union of $k$
connected components homeomorphic to 
$S^{n-1}$, and $l$
connected components homeomorphic to 
$\R P^{n-1}$. Furthermore, the integers $k$ and $l$ satisfy
$$\deg(\gamma) =  \Vol_{n}(\Delta(X)) = 2k + l.$$ 
\end{cor}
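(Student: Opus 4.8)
The plan is to realise $\R\overline X$ as a covering space of $\RP^{n-1}$ through the real locus of $\overline\gamma_X$, exactly as in the discussion preceding the statement, and then read off both the list of possible components and the degree identity from the fundamental group of $\RP^{n-1}$ together with Formula~(\ref{eqn:degree}). Since $X$ is torically maximal, $\overline X$ is non-singular, so $\R\overline X$ is a smooth compact $(n-1)$-manifold, and $\overline\gamma_X\colon\overline X\to\CP^{n-1}$ is totally real. By Theorem~\ref{thm:covering} the differential $d_x\overline\gamma_X$ is an isomorphism at every $x\in\R\overline X$, so $\R\overline\gamma_X\colon\R\overline X\to\RP^{n-1}$ is a local diffeomorphism; being a local diffeomorphism out of a compact manifold it is proper, and hence a covering map onto the connected base $\RP^{n-1}$.

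I would then determine the number of sheets and the homeomorphism type of the components. Total reality forces all $\deg(\overline\gamma_X)$ preimages of a generic real point to lie in $\R\overline X$, and since the differential is everywhere an isomorphism none of these is a ramification point; hence the covering has exactly $\deg(\overline\gamma_X)$ sheets (in particular $\R\overline X\neq\emptyset$). For $n\ge 3$ one has $\pi_1(\RP^{n-1})\cong\Z/2\Z$, which admits only two subgroups, so the only connected coverings of $\RP^{n-1}$ are $\RP^{n-1}\to\RP^{n-1}$ of degree $1$ and $S^{n-1}\to\RP^{n-1}$ of degree $2$. Each connected component of $\R\overline X$ is therefore homeomorphic to $S^{n-1}$ or to $\RP^{n-1}$; writing $k$ and $l$ for the respective numbers of components, a generic fibre meets each $S^{n-1}$ in two points and each $\RP^{n-1}$ in one, so the total number of sheets equals $2k+l$. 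Comparing with the sheet count above and invoking Formula~(\ref{eqn:degree}) gives $2k+l=\deg(\overline\gamma_X)=\Vol_n(\Delta(X))$.

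The step carrying the real content is the identification of the number of sheets with the full complex degree $\deg(\overline\gamma_X)$. The local-diffeomorphism property from Theorem~\ref{thm:covering} alone only shows that $\R\overline\gamma_X$ is unramified of the correct local form; it is precisely total reality---the absence of any non-real point in a fibre over a real point---that keeps the real degree from dropping below $\Vol_n(\Delta(X))$ and allows the two to be equated. Everything else is the classification of connected coverings of $\RP^{n-1}$, which is available exactly because $n-1\ge 2$.
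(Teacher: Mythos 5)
Your proposal is correct and follows essentially the same route as the paper: Theorem~\ref{thm:covering} makes $\R\overline{X}\to\RP^{n-1}$ a covering map, the classification of connected coverings of $\RP^{n-1}$ for $n\ge 3$ gives the two component types, and total reality together with Formula~(\ref{eqn:degree}) identifies the number of sheets with $\Vol_n(\Delta(X))$, yielding $2k+l$. Your explicit justification of the sheet count via a generic regular real value is a slightly more detailed version of what the paper leaves implicit.
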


\section{Torically maximal hypersurfaces}\label{sec:surfaces}

Let $X \subset (\CC^*)^3$ be a
torically maximal surface.
By Lemma \ref{lem:restrictionLogGauss},
for each
2-dimensional torus orbit $Y$ of $\Tor(X)$, 
the curve $Z = \overline{X} \cap Y$
is a
simple Harnack
curve.
By \cite{Mik-00}, the intersections of $\overline Z$ with the toric
boundary divisors of 
$\Tor(Z)$
are real and contained in a single component of $\R\overline Z$.
Call this connected component the
\textit{outer
circle} of the curve $\overline Z$  and  denote it by
$O( Z)$.

\begin{lem}\label{lem:connCompCurve}
  Let $X \subset (\CC^*)^3 $ be a
torically maximal surface.
Then there exists a connected component of $ \R\overline{X} $
containing
all outer circles
of $\overline{X}$.   
\end{lem}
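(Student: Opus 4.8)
The plan is to exploit the combinatorial structure of the three-dimensional polytope $\Delta(X)$ together with the characterisation of the outer circle recalled above. Recall that the $2$-dimensional torus orbits $Y$ of $\Tor(X)$ are in bijection with the $2$-dimensional faces (facets) of $\Delta(X)$, and the $1$-dimensional orbits correspond to the edges of $\Delta(X)$. For a facet $F$ I write $Y_F$ for the corresponding orbit and $Z_F=\overline X\cap \overline{Y_F}$ for the associated simple Harnack curve, with outer circle $O(Z_F)\subset\R\overline X$.

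First I would show that the outer circles coming from two adjacent facets lie in the same connected component of $\R\overline X$. Suppose facets $F_1$ and $F_2$ share an edge $e$. Then the $1$-dimensional orbit $Y_e$ is contained in the closures $\overline{Y_{F_1}}$ and $\overline{Y_{F_2}}$, so that $\overline{Y_e}\subset\overline{Y_{F_i}}$ for $i=1,2$. Consequently the finite set $\overline X\cap\overline{Y_e}$ coincides with both $\overline{Z_{F_1}}\cap\overline{Y_e}$ and $\overline{Z_{F_2}}\cap\overline{Y_e}$, that is, with the intersection of each of the two Harnack curves with the toric boundary divisor corresponding to $e$. By the characterisation of the outer circle recalled above (a consequence of \cite{Mik-00}), this set is real and contained in $O(Z_{F_1})$, and likewise in $O(Z_{F_2})$. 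Moreover it is nonempty: its cardinality equals the lattice length of $e$, which is positive since $e$ is a genuine edge of the $3$-dimensional polytope $\Delta(X)$. Hence $O(Z_{F_1})$ and $O(Z_{F_2})$ are two connected subsets of $\R\overline X$ sharing at least one point, and therefore belong to the same connected component.

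I would then conclude by a connectivity argument on the facets of $\Delta(X)$. The boundary $\partial\Delta(X)$ of a convex $3$-dimensional polytope is homeomorphic to $S^2$, so the graph whose vertices are the facets of $\Delta(X)$ and whose edges record pairs of facets sharing an edge is connected. Propagating the previous step along a path in this graph shows that all the outer circles $O(Z_F)$ lie in a single connected component of $\R\overline X$, which is the desired statement.

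The main obstacle is the verification underlying the first step: namely that the boundary intersection points over a shared edge genuinely lie on \emph{both} outer circles at once. This rests on the identification $\overline{Z_{F_i}}\cap\overline{Y_e}=\overline X\cap\overline{Y_e}$ coming from $\overline{Y_e}\subset\overline{Y_{F_i}}$, combined with the fact that for a simple Harnack curve all toric boundary intersections sit on the outer circle; these points are common to the two curves only because they are computed inside the same ambient $\overline X$. Once this is in place, the remainder is the purely combinatorial fact that the facet-adjacency graph of a convex $3$-polytope is connected.
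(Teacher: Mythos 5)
Your proposal is correct and follows essentially the same route as the paper: adjacent facets give outer circles meeting in the (nonempty, real) boundary intersection over the shared edge, and connectivity of the facet-adjacency graph of $\Delta(X)$ propagates this to all outer circles. Your justification that the common points lie on both outer circles simultaneously (via $\overline{Z_{F_i}}\cap\overline{Y_e}=\overline X\cap\overline{Y_e}$ and Mikhalkin's result that boundary intersections of a simple Harnack curve sit on its outer circle) is exactly the content the paper leaves implicit in its claim that the two outer circles meet in $\mathrm{Length}(E)$ points.
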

We call this connected component of $ \R\overline{X} $ the \emph{outer
  component} of $X$.
\begin{proof}
  Each 
outer circle 
is an embedded
circle
contained in  some connected component of 
$\R  \overline{X}$. Facets $F$ and $F'$ of $\Delta(X)$ intersect in an
  edge $E$ if and only if the corresponding  outer circles $O(Z)$ and
  $O(Z')$ intersect transversally in exactly $\text{Length}(E)$
  points, where
$\text{Length}(E)$ is the lattice  length of $E$, i.e.
$$\text{Length}(E) := |E \cap \ZZ^{3}| - 1.$$
In particular,
  $O(Z)$ and $O(Z')$ are contained in the same
  connected component of $\R \overline{X}$. The facets of  $\Delta(X)$
  are connected via the edges, therefore there exists a single
   connected component $\R \overline{X}$ containing  the outer circles of all
  boundary curves of $\overline{X}$. 
\end{proof}

\begin{prop}\label{prop:CisRP2}
  Let $X \subset (\CC^*)^3$ be
  a  torically maximal surface.
Then  the
outer component of $X$
is homeomorphic to $\R P^2$, 
and
$\Delta(X)$
is a tetrahedron with all edges of lattice length $1$. 
\end{prop}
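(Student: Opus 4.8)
The plan is to read both the topology of the outer component and the combinatorics of $\Delta(X)$ off the degree of the covering that $\overline{\gamma}_X$ induces there. Let $C$ denote the outer component of $X$. By Theorem \ref{thm:covering} and the discussion preceding Corollary \ref{cor:SnRPn}, $\overline{\gamma}_X$ restricts to a covering map $\overline{\gamma}_X|_C\colon C\to\RP^2$; let $\delta$ be its degree. Since $\chi(C)=\delta\cdot\chi(\RP^2)=\delta$ and $C$ is a closed connected surface, $\delta\in\{1,2\}$, with $C\cong S^2$ when $\delta=2$ and $C\cong\RP^2$ when $\delta=1$, so the first assertion amounts to $\delta=1$. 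For each facet $F$ of $\Delta(X)$ write $Y_F$ for the corresponding $2$-dimensional orbit, $Z_F=\overline X\cap Y_F$ for the associated simple Harnack curve, and $\ell_F\subset\RP^2$ for the real projective line $\P(L_F)$ attached to the direction $L_F$ of $F$ by Lemma \ref{lem:restrictionLogGauss}. By Lemma \ref{lem:connCompCurve} the outer circle $O(Z_F)$ lies in $C$, and by Lemma \ref{lem:restrictionLogGauss} it maps into $\ell_F$; thus $O(Z_F)$ is one of the circles forming the embedded $1$-manifold $(\overline{\gamma}_X|_C)^{-1}(\ell_F)$, and $O(Z_F)\to\ell_F$ has some degree $d_F\ge 1$. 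The analytic input I would record is the identity $d_F=\big(\sum_{E\subset F}\mathrm{Length}(E)\big)-2$: the logarithmic Gauss map of $Z_F$ has total degree $\Vol_2(F)$, each of its $|\mathrm{int}(F)\cap\Z^3|$ inner ovals bounds a convex complementary component of the amoeba and hence maps onto $\ell_F$ with degree $2$, and Pick's formula turns $\Vol_2(F)$ minus twice the number of interior lattice points into the lattice perimeter minus $2$. Summing the component degrees of $(\overline{\gamma}_X|_C)^{-1}(\ell_F)$ gives $\delta$, so in particular $d_F\le\delta$.

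Suppose first $\delta=2$, so that $C\cong S^2$ and $\overline{\gamma}_X|_C$ is the orientation double cover $S^2\to\RP^2$. The preimage of a projective line is then connected (its fundamental group surjects onto $\pi_1(\RP^2)$), so $(\overline{\gamma}_X|_C)^{-1}(\ell_F)=O(Z_F)$ and $d_F=2$. For two facets $F\neq F'$ sharing an edge $E$ the lines $\ell_F\neq\ell_{F'}$ meet in one point $p$, whose fibre $(\overline{\gamma}_X|_C)^{-1}(p)$ consists of two points lying on both $O(Z_F)$ and $O(Z_{F'})$; hence $O(Z_F)\cap O(Z_{F'})$ has exactly two points, i.e. $\mathrm{Length}(E)=2$. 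As every edge then has length $2$ and every facet has at least three edges, each facet satisfies $\sum_{E\subset F}\mathrm{Length}(E)\ge 6$ and therefore $d_F\ge 4$, contradicting $d_F=2$. Hence $\delta=1$ and $C\cong\RP^2$.

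It remains to identify $\Delta(X)$. Now $\overline{\gamma}_X|_C\colon\RP^2\to\RP^2$ is a homeomorphism, so $d_F\le 1$; combined with $d_F=\sum_{E\subset F}\mathrm{Length}(E)-2\ge 1$ this forces $\sum_{E\subset F}\mathrm{Length}(E)=3$, so every facet is a triangle whose three edges have lattice length $1$, and in particular every edge of $\Delta(X)$ has length $1$. No two facets are parallel: if $\ell_F=\ell_{F'}$ with $F\neq F'$ then $F\cap F'=\varnothing$, so $O(Z_F)$ and $O(Z_{F'})$ would be disjoint circles both mapping onto $\ell_F$ with degree $1$, against $\delta=1$. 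Thus any two facets $F\neq F'$ satisfy $\ell_F\cap\ell_{F'}=\{p\}$, and since $\overline{\gamma}_X|_C$ is bijective, $O(Z_F)\cap O(Z_{F'})=(\overline{\gamma}_X|_C)^{-1}(p)$ is a single point. Because torical non-singularity makes $\overline X$ miss the $0$-dimensional orbits, this intersection can be non-empty only if $F\cap F'$ is an edge; hence every pair of facets is adjacent. Writing $m$ for the number of facets, all triangular with all $\binom{m}{2}$ pairs adjacent, the edge-facet incidence count $3m=2\binom{m}{2}$ gives $m=4$, and Euler's relation yields four vertices. Therefore $\Delta(X)$ is a tetrahedron with all edges of lattice length $1$.

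The main obstacle is the degree computation $d_F=\sum_{E\subset F}\mathrm{Length}(E)-2$, which is where the fine structure of simple Harnack curves enters, namely the homeomorphism of the real locus onto the amoeba boundary and the convexity of the bounded complementary components; once this local fact and the inequality $d_F\le\delta$ are in hand, the two cases $\delta=2$ and $\delta=1$ are settled by the elementary fibre counting above and by purely combinatorial arguments about $\Delta(X)$.
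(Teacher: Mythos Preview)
Your proof is correct and follows the same architecture as the paper's: identify $C$ as $S^2$ or $\RP^2$ via the covering, invoke the key identity $d_F=\big(\sum_{E\subset F}\mathrm{Length}(E)\big)-2$, rule out $S^2$ by a perimeter contradiction, then in the $\RP^2$ case deduce that all facets are unit triangles and pairwise adjacent, hence $\Delta(X)$ is a tetrahedron. The paper simply cites \cite{Mik-00} for the degree identity, whereas you sketch it via Pick and the degree-$2$ contribution of each inner oval; both are fine.

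The sub-arguments differ in two places, and your variants are arguably cleaner. In the $S^2$ case, the paper gets even edge lengths from the parity of transverse intersections of curves on $S^2$, while you use that the preimage of a projective line under the orientation double cover is connected to force $d_F=2$ and then read off $\mathrm{Length}(E)=2$ from the two-point fibre over $\ell_F\cap\ell_{F'}$; either way one reaches $d_F\ge 4$ versus $d_F\le 2$. In the $\RP^2$ case, the paper shows pairwise adjacency by a mod-$2$ homology argument (each outer circle meets some other in one point, hence represents the nonzero class in $H_1(\RP^2;\Z/2\Z)$, hence any two meet), while you use that the degree-$1$ cover is a bijection, so $O(Z_F)=(\overline{\gamma}_X|_C)^{-1}(\ell_F)$ exactly, and the single intersection point of $\ell_F$ and $\ell_{F'}$ pulls back to a single point of $O(Z_F)\cap O(Z_{F'})$; combined with your observation that non-adjacent outer circles are disjoint (by torical non-singularity and the orbit-closure description), this forces adjacency. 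Your edge--facet count $3m=2\binom{m}{2}$ then pins down $m=4$, whereas the paper just says ``each pair of faces shares an edge, hence $\Delta(X)$ is a tetrahedron''.
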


\begin{proof}
  Let us denote by $\mathcal{C}$ the outer component of $X$.
  By Corollary \ref{cor:SnRPn},
 it is homeomorphic to either $S^2$ or $\RR P^2$. 
 Suppose
$\mathcal{C}$
 is homeomorphic to $S^2$.
Since any $2$ closed curves in $S^2$ intersecting transversally
do so  in an even number of points, we deduce that 
each edge of $\Delta(X)$ has an even lattice length.
A facet $F$
of $\Delta(X)$ has at least $3$ edges, therefore  
the
lattice
perimeter of every facet $F$ satisfies
$$\sum_{E \in \mathcal{E}(F)} \text{Length}(E) \geq 6,$$
where
$\mathcal{E}(F)$ denotes the set of edges of $F$.

On the other hand, by \cite{Mik-00} we have
\begin{equation}\label{eqn:degreePerimeter}
  \deg({\overline{\gamma}}_X|_{O(Z)}) =  \sum_{E \in \mathcal{E}(\Delta(Z))} \text{Length}(E)-2, 
\end{equation} 
for any outer circle $O(Z)$ of $\overline X$.
Since the  restriction of the logarithmic Gau\ss \  map
${\overline{\gamma}}_X$ to
$\mathcal C$ has degree 2,
 Equation  $(\ref{eqn:degreePerimeter})$ gives
$$ \sum_{E \in \mathcal{E}(\Delta(Z))} \text{Length}(E)-2  \le 2. $$
Therefore,
$\sum_{E \in \mathcal{E}(F)} \text{Length}(E) \le 4$ for any facet $F$ of $\Delta(X)$,
which yields a contradiction to the lower bound of the lattice perimeter of $F$ given above.

So  $\mathcal{C}$ is homeomorphic to $\RR P^2$ and the restriction of
the logarithmic Gau\ss \  map ${\overline{\gamma}}_X|_{\mathcal C}: \mathcal{C} \to \R P^2$ is
$1$-$1$.
Equation $(\ref{eqn:degreePerimeter})$
gives
$$\sum_{E \in \mathcal{E}(\Delta(Z))}
\text{Length}(E)   -2= 1,$$
which implies that each facet $F$ of $\Delta(X)$ is a
lattice triangle, and that $\text{Length}(E) = 1$ for all edges $E$ of
$\Delta(X)$.  In particular, each outer circle $O(Z)$ intersects
some other
outer circle $O(Z')$
%
 transversally
 in a single point. Hence each outer circle
 realises the non-zero class in $H_1(\mathcal C;\Z/2\Z)$.
But then any $2$ outer circles intersect, that is to say
each pair of faces of $\Delta(X)$ must share an edge.
This implies that $\Delta(X)$ is a lattice tetrahedron,
and the proposition is proved. 
\end{proof}

\begin{proof}[Proof of Theorem \ref{thm:noProjHarnack}]
  By 
  Corollary  \ref{prop:maxfinite-totreal} 
  the hypersurface  $X$ is
  torically maximal.
So the case $n= 3$ follows immediately from 
Proposition \ref{prop:CisRP2}. If $X \subset (\CC^*)^n$ is
torically maximal and projective for $n> 3$ 
then by intersecting $\overline {X}$ with a $3$-dimensional
torus orbit of $\Tor(X)$,
we would obtain 
a torically maximal  projective surface, 
which by above is a plane.
Therefore $\overline X$ must be a hyperplane. 
\end{proof}

Recall that given  a 
lattice polytope  $F$ of dimension $k$ in $ \R^{n}$, its
\emph{lattice volume} is defined as
$$\mathrm{Vol}_{k}(F) = \frac{\mathrm{Vol}^E_k(F)}{\mathrm{Vol}^E_k(\Pi_F)}, $$
where $ \mathrm{Vol}^E_k$ denotes
any
Euclidean volume in the affine span $V_F$ of $F$, and $\Pi_F$ is any
lattice simplex whose vertices form an affine basis of $V_F\cap \Z^n$.
We say that $F$ is unimodular if $\mathrm{Vol}_k(F) = 1$.

An $n$-dimensional lattice polytope $\Delta \subset \R^{n}$ is said to
be smooth in dimension $1$ if for every $1$-dimensional face $E$ of
$\Delta$,
there exist $n-1$ outward primitive integer normal vectors to the facets
adjacent to $E$ that can be completed
to a basis of $\Z^n$. 
If $\Delta$ is smooth in dimension $1$, then the corresponding toric
variety has singularities only at
$0$-dimensional torus orbits.
If $X \subset (\C)^n$ is a torically maximal hypersurface, then 
its Newton polytope $\Delta(X)$ is smooth in dimension $1$ since
$\overline X$ is non-singular.

\begin{lem}\label{lem:simplexUni}
Let $n\ge 3$, and let $\Delta \subset \R^{n}$ be an $n$-dimensional  lattice simplex
smooth in dimension $1$ such that all its facets are unimodular. 
Then 
$\Delta$ is unimodular. 
\end{lem}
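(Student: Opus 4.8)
The plan is to fix a vertex $v_0$ of $\Delta$, translate it to the origin, and let $e_k=v_k-v_0$ ($k=1,\dots,n$) be the vectors to the remaining vertices. Writing $M=[e_1|\cdots|e_n]$ for the integer matrix with these columns, one has $\Vol_n(\Delta)=|\det M|=:V$, so the claim is that $V=1$. I would argue by contradiction: suppose a prime $p$ divides $V$, reduce everything modulo $p$, and bound the rank over $\mathbb F_p$ of both $M$ and its adjugate $\mathrm{adj}(M)$ in two incompatible ways.

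The two hypotheses enter through these two matrices. First, the facet $F_i$ opposite the $i$-th vertex is spanned by $\{e_k\}_{k\neq i}$, and its unimodularity says precisely that these $n-1$ vectors extend to a $\Z$-basis of $\Z^n$; hence any $n-1$ columns of $M$ are independent modulo $p$, while $\det M\equiv 0$. Thus $\bar M$ has rank exactly $n-1$ over $\mathbb F_p$, so $\ker\bar M$ is one-dimensional. Second, row $k$ of $\mathrm{adj}(M)$ is an integer vector $u_k$ with $\langle u_k,e_j\rangle=\det(M)\,\delta_{kj}$, i.e.\ a normal to $F_k$; since $F_k$ is unimodular, this $u_k$ is, up to sign, the primitive outward normal. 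Now the edge $\{v_0,v_i\}$ is adjacent exactly to the facets $F_k$ with $k\in\{1,\dots,n\}\setminus\{i\}$, so smoothness in dimension $1$ asserts that $\{u_k\}_{k\neq i}$ extends to a $\Z$-basis; letting $i$ vary, every $(n-1)$-subset of $\{u_1,\dots,u_n\}$ is independent modulo $p$, and therefore the rows of $\overline{\mathrm{adj}(M)}$ span a space of dimension at least $n-1$.

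The two estimates are tied together by the identity $M\,\mathrm{adj}(M)=\det(M)\,I\equiv 0 \pmod p$: every column of $\overline{\mathrm{adj}(M)}$ lies in $\ker\bar M$, so the column rank, hence the rank, of $\overline{\mathrm{adj}(M)}$ is at most $\dim\ker\bar M=1$. Combined with the lower bound from smoothness in dimension $1$ this forces $n-1\le 1$, contradicting $n\ge 3$; hence no prime divides $V$, and $\Delta$ is unimodular.

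The main point requiring care is the precise dictionary between the hypotheses and this linear algebra. In particular I would check carefully that the rows of $\mathrm{adj}(M)$ really are the primitive facet normals — this is where facet unimodularity gets used a second time, so that reducing the $u_k$ modulo $p$ does not spuriously drop the rank — and that the two adjacency patterns (the facets through $v_0$ versus the facets along the edges at $v_0$) yield independence of every $(n-1)$-subset of the $e_k$ and of the $u_k$ respectively. The hypothesis $n\ge 3$ is consumed exactly at the final inequality $n-1\le 1$, matching the fact that the statement fails for $n=2$.
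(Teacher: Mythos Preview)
Your argument is correct and takes a genuinely different route from the paper's. The paper proceeds by an explicit coordinate normalisation: it places one unimodular facet at the standard simplex on $0,e_1,\dots,e_{n-1}$, writes the remaining vertex as $a=(a_1,\dots,a_{n-1},v)$ so that $\Vol_n(\Delta)=v$, computes directly that the primitive normal to the facet opposite $e_i$ is $\pm(a_ie_n-ve_i)$ (this is where unimodularity of the other facets enters), and then applies smoothness in dimension~$1$ along the single edge $[0,e_1]$ to obtain a determinant condition $\pm c_1 v^{n-2}=\pm 1$, forcing $v=1$. Your approach instead packages the same two hypotheses into a rank argument over $\mathbb{F}_p$: facet unimodularity pins the rank of $\bar M$ at exactly $n-1$ and certifies that the rows of $\mathrm{adj}(M)$ are already primitive, while smoothness along the edges at $v_0$ forces $\overline{\mathrm{adj}(M)}$ to have rank at least $n-1$; the adjugate identity $M\,\mathrm{adj}(M)\equiv 0$ then collapses this rank to at most $1$. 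The paper's computation is more hands-on and uses smoothness at only one edge, at the cost of a preliminary change of coordinates; your version is coordinate-free and makes the role of the hypothesis $n\ge 3$ completely transparent (it is exactly the inequality $n-1>1$). The one point worth writing out in full in your version is the verification that the adjugate rows are genuinely primitive --- you flag this yourself, and it does require the facet-unimodularity hypothesis a second time, since an extra factor in $u_k$ could otherwise kill it modulo~$p$.
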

\begin{proof}
  Denote by $(e_1,\ldots,e_n)$  the canonical basis of $\R^n$,
and   choose a facet $F$ of $\Delta$. 
 Since $F$ is unimodular, it can be  assumed, up to an integer affine
 transformation of $\R^n$, that 
 the vertices of $F$ are
$ 0, e_1, \dots , e_{n-1}$.
 There is $1$ additional vertex $a$ of $\Delta$ with
 $$a = (a_1, \dots, a_{n-1}, v) \in \ZZ^{n}.$$
 Note that  $\text{Vol}_n(\Delta) = |\mathrm{det}(e_1, \dots , e_{n-1}, a)| =  v$.

 By assumption,  the
 facet of $\Delta$ which is the convex hull of
all vertices of $\Delta$ except $e_i$
 is also unimodular, so there is a vector $c = (c_1, \dots , c_{n}) \in \ZZ^{n}$ such that 
$$\mathrm{det}(c, e_1, \dots, \hat{e_i} , \dots, e_{n-1}, a) = \pm (vc_i - a_ic_n) = \pm 1.$$
 Therefore,
 the primitive integer normal vectors to this  facet 
 are
$\pm (a_ie_{n} - ve_i)$.

The condition that $\Delta$ is smooth in dimension $1$ implies that at
each edge $E$ of $\Delta$, 
the
primitive integer outward
normal vectors of the facets of $\Delta$ adjacent to $E$
form
a subset of a  basis of $\Z^{n}$.  
Applying this condition at the edge $[0, e_1]$ of $\Delta$,
we deduce that
there must exist $c = (c_1, \dots , c_{n}) \in \ZZ^{n}$, such that
$$\text{det}(c, a_2e_{n} - ve_2, \dots, a_{n-1}e_{n} - ve_{n-1}, e_{n}) = \pm c_1\cdot v^{n-2} = \pm 1.$$ 
Therefore,  $\text{Vol}_n(\Delta) = v = 1$ and $\Delta$ is unimodular
as stated.
\end{proof}

\begin{rem}
  In dimension $3$, there are tetrahedra, with unimodular faces which are not unimodular. For example, the convex hull of 
$$(0, 0, 0), (1, 0, 0), (0, 1, 0), (1, p, q), $$
for every pair of $p, q$ with $\mathrm{gcd}(p, q) = 1$, has unimodular facets but the volume of the tetrahedron is $q$. 
So this polytope is not unimodular for $q>1$. Notice that it fails to be smooth in dimension $1$ along the edge joining $(1, 0, 0)$ and  $(0, 1, 0)$. 
\end{rem}

\begin{proof}[Proof of Theorem \ref{thm:noGenHarnack}]
The theorem is proved by induction on $n$, starting with $n= 3$ as the
base case.
Recall that $\Delta(X)$ is smooth in dimension $1$ if $X \subset (\C)^n$ is a
torically maximal hypersurface.

Let $X\subset (\C^*)^3$ be a
torically maximal surface.
By Corollary \ref{cor:SnRPn}, the real part $\R \overline{X}$ is a disjoint union of $k$
connected components homeomorphic to 
$S^{n-1}$ and $l$
connected components homeomorphic to 
$\R P^{n-1}$, 
such that
$$  \Vol_{n}(\Delta(X)) = 2k + l.$$ 
By Proposition \ref{prop:CisRP2}, the
outer component of $X$ is homeomorphic to
$\R P^2$, and
$\Delta(X)$
is a tetrahedron with all edge lengths equal to
$1$.
In particular we have $\sum_{E \in \mathcal{E}(\Delta)} \text{Length}(E) = 6$.

Let us denote by
$\beta_{\ast}(M; \Z/2\Z)$  the sum of all $\Z/2\Z$ Betti numbers
of a manifold $M$.
The Smith-Thom inequality states that (cf e.g. \cite{BenRis})
\begin{eqnarray}\label{eqn:smiththom}
\beta_*(\R \overline{X}; \Z / 2\Z) \leq \beta_*(\overline{X}; \Z/2\Z). 
\end{eqnarray}
The total sum of Betti numbers for $\RR P^2 $ and $S^2$ are  
$$ \beta_{\ast}(\R P^2; \Z/ 2\Z) = 3 \qquad \text{and} \qquad \beta_{\ast}(S^2; \Z/2\Z) = 2, $$
so that 
\begin{eqnarray}\label{eqn:betaR}
\beta_*(\R \overline{X}; \Z /2 \Z) = 3l + 2k  = 2l +  \text{Vol}_3(\Delta).
\end{eqnarray}

Moreover,  Khovanskii's formula \cite{Kho78} for the Euler
characteristic of the complex hypersurface $\overline{X}$ gives 
\begin{eqnarray}\label{eqn:Euler}
\end{eqnarray}
$$ \beta_*(\overline{X}; \Z/2\Z) = \text{Vol}_3(\Delta(X)) -  \sum_{F \in \mathcal{F}(\Delta(X))} \text{Area}(F)  +  \sum_{E \in \mathcal{E}(\Delta(X))} \text{Length}(E),$$
where $\mathcal{F}(\Delta(X))$ denotes the set of facets of $\Delta(X)$.
Combining Equations
(\ref{eqn:betaR}), (\ref{eqn:Euler}),
(\ref{eqn:smiththom})  
yields 
\begin{align*} 
2l +  \text{Vol}_3(\Delta)  \leq \text{Vol}_3(\Delta) -  \sum_{F \in \mathcal{F}(\Delta)} \text{Area}(F)  +  \sum_{E \in \mathcal{E}(\Delta)} \text{Length}(E), 
\end{align*}
which further implies   
that 
\begin{align*}\label{eqn:inequality}
 \sum_{F \in \mathcal{F}(\Delta)} \text{Area}(F)   \leq  4.
\end{align*} 

Therefore, 
 each facet of $\Delta$ is unimodular.  
Since $\Delta$ is also non-singular in dimension $1$,   Lemma
\ref{lem:simplexUni} implies that  $\Delta $ is itself
unimodular.
Hence $\Tor(X)=\CC P^3$ and $\overline{X}$ is a hyperplane. 
 
 \medskip
 Now proceed by induction for $n > 3$. Suppose $X \subset (\CC^*)^{n}$
 is a
 torically maximal hypersurface.
By
induction,
each facet $F$ of
$\Delta(X)$ is unimodular,   the
corresponding toric divisor $T_F$
is
$\CC P^{n}$, and
$\overline{X} \cap T_F$
is a hyperplane. 
In particular, the intersection 
$\RR \overline{X} \cap T_F$
is connected for all facets $F$.  
  
Therefore, similarly to Lemma \ref{lem:connCompCurve}, there is a single
connected component $\mathcal{C}$ of $\RR \overline{X}$ which contains
all
intersections  $\RR \overline{X} \cap T_F$ when $F$ runs over all
faces  of $\Delta(X)$.
Let $F$ be a facet of $\Delta(X)$, and let $A$ be
a $2$-dimensional face  of $\Delta(X)$ intersecting  $F$ along  an edge
$E$. Hence $ \RR \overline{X} \cap T_F$ and
$ \RR\overline{X} \cap T_A$ intersect transversally, and their
intersection is $\RR \overline{X} \cap  T_E$ which is a  single
point by the unimodularity of $F$.  
Hence  
the class realised by
$ \RR \overline{X} \cap  T_F$ in
$H_{n-2}(\mathcal{C}; \ZZ / 2\ZZ)$
is non-trivial. In particular $H_{n-2}(\mathcal{C}; \ZZ / 2\ZZ)\ne 0$, and
 $\mathcal{C}$ is homeomorphic to
$\RR P^{n-1}$.
Furthermore, for any $2$ facets $F$ and $F'$ of $\Delta(X)$, the
intersection of $ \RR \overline{X} \cap T_F$ and
$ \RR\overline{X} \cap T_{F'}$ realises the non-zero homology class in
$H_{n-3}(\mathcal{C}; \ZZ / 2\ZZ) =\Z / 2 \Z.$
In particular, the intersection  is non empty and of dimension $n-3$.
On the other hand, the intersection
of  
$\RR \overline{X} \cap T_F$ and 
$\RR \overline{X} \cap T_{F'}$ 
is of codimension $2$ if and only if $F$ and $F'$ intersect in a face
of $\Delta(X) $ of codimension $2$. This implies that  every pair of
facets of $\Delta(X)$ must meet in a codimension $2$ face. Therefore, the 
polytope 
$\Delta(X)$ has at most  $n+1$ facets, all of which are ${n-1}$
dimensional unimodular lattice simplicies. Since $\Delta(X)$ is also
smooth in dimension $1$ by assumption, applying Lemma
\ref{lem:simplexUni} completes the proof.   
\end{proof}

\section{A singular torically maximal surface}\label{sec:example}

We end the paper with an example showing that
the hypothesis that any 
 singularities of $\Tor(X)$  are contained in the
 $0$-dimensional torus orbits  is 
 essential in Theorem 
 \ref{thm:noGenHarnack}.
 Let   $\Delta \subset \RR^3$  be the simplex with  vertices
$$(0, 0, 0), (1, 0, 0), (0, 1, 0), (0, 0, 2),$$
and let  $X\subset  (\C^*)^3$ be a non-singular real algebraic
surface with $\Delta(X)=\Delta$. Up to a real toric change of
coordinates, the surface  $X$ has equation
$$ az_3^2 + z_3 + z_2 +z_1 + 1=0 $$
with $a\in  \R^\times$.

The variety  $\Tor(X)$ is singular along the orbit $Y$ corresponding to the
edge $e=[(1, 0, 0),  (0, 1, 0)]$.
%
%
%
Namely, the surface
$\overline X$ has an
ordinary double point at $p=\overline X\cap Y$. The
blow-up of $\Tor(X)$ along $Y$ is a non-singular toric variety $Z$, and
the
proper transform $\widetilde{X}$ of $\overline X$
is non-singular.
Note that $\widetilde X$ is simply the blow-up of $\overline X$ at the
point $p$.  We denote by $C$ the corresponding $(-2)$-curve in
$\widetilde X$. The logarithmic Gau{\ss} map
$\gamma_X:X\to\CP^2$ extends to a map
$\widetilde \gamma_X:\widetilde{X}\to\CP^2$ that contracts
the exceptional curve $C$ to a point.
In particular, the 
map $\overline \gamma_X:\overline X\to\CP^2$ is the composition of the
blow-down map with the map 
${\widetilde {\gamma}}_X$.

\begin{prop}
The map ${\overline{\gamma}}_X$ is totally real for $a\in(0,\frac{1}{4})$.
\end{prop}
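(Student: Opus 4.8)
The plan is to make the logarithmic Gau\ss\ map explicit and to reduce the reality of its fibres to the positivity of a single quadratic form. With $P=az_3^2+z_3+z_2+z_1+1$ one computes $z_1\partial_{z_1}P=z_1$, $z_2\partial_{z_2}P=z_2$ and $z_3\partial_{z_3}P=2az_3^2+z_3$, so that
$$\gamma_X(z_1,z_2,z_3)=[z_1:z_2:2az_3^2+z_3].$$
Fix a real direction $[u:v:w]\in\RP^2$ and set $\sigma:=u+v$. A point of $X\subset(\C^*)^3$ lies in $\gamma_X^{-1}([u:v:w])$ exactly when $z_1=\lambda u$, $z_2=\lambda v$, $2az_3^2+z_3=\lambda w$ for some $\lambda\in\C^*$, subject to $az_3^2+z_3+\lambda\sigma+1=0$. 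Eliminating $\lambda$ between the last two relations yields the single quadratic
$$a(w+2\sigma)\,z_3^2+(w+\sigma)\,z_3+w=0.$$

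The heart of the argument is that, for real $[u:v:w]$, this quadratic has real coefficients and its discriminant is the quadratic form
$$D=(1-4a)\,w^2+2(1-4a)\,w\sigma+\sigma^2$$
in $(w,\sigma)$, with matrix $\begin{pmatrix}1-4a & 1-4a\\ 1-4a & 1\end{pmatrix}$. For $a\in(0,\tfrac14)$ both the leading entry $1-4a$ and the determinant $4a(1-4a)$ are positive, so $D$ is positive definite and $D>0$ whenever $(w,\sigma)\neq(0,0)$. Reading the displayed equation as a binary form in $z_3$—so that a vanishing leading coefficient corresponds to a root at infinity, i.e. a point of $\overline X$ on the toric boundary—both of its roots are then real, and back-substitution makes $\lambda$, hence $z_1$ and $z_2$, real as well. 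Thus $\overline{\gamma}_X^{-1}([u:v:w])$ is real for every real direction except possibly the one with $(w,\sigma)=(0,0)$, namely $q:=[1:-1:0]$. Since $\overline{\gamma}_X$ has degree $\Vol_3(\Delta)=2$ (computed on the resolution $\widetilde X$ via \eqref{eqn:degree}), these two distinct real points already exhaust the fibre over each $y\neq q$, so no complex preimage can be hidden on the boundary there.

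It remains to handle the single exceptional direction $q$, and this is where I expect the real work to lie. By Lemma \ref{lem:restrictionLogGauss}, $q=[1:-1:0]$ is precisely the image under $\overline{\gamma}_X$ of the orbit $Y$ over the edge $e=[(1,0,0),(0,1,0)]$, since $e$ is parallel to $(1,-1,0)$; in particular the node $p=\overline X\cap Y$ lies in $\overline{\gamma}_X^{-1}(q)$. Here the quadratic degenerates to $0=0$, so the discriminant bound says nothing, and the fibre must instead be analysed in the blow-up $\widetilde X$. I would argue that the exceptional $(-2)$-curve $C$ is real—the node $p$ is the unique singularity of the real surface $\overline X$, hence a real point—that $\widetilde{\gamma}_X$ contracts $C$ to $q\in\RP^2$, and that the remaining points of the fibre over $q$ lie on the boundary curves $\overline X\cap Y'$ for the two-dimensional orbits $Y'$ meeting $Y$. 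These boundary curves are simple Harnack curves, so their logarithmic Gau\ss\ maps are totally real by Lemma \ref{lem:restrictionLogGauss} together with the planar theory, which forces the corresponding boundary preimages to be real.

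Combining the two cases gives $\overline{\gamma}_X^{-1}(y)\subset\R\overline X$ for all $y\in\RP^2$, that is, $\overline{\gamma}_X$ is totally real for $a\in(0,\tfrac14)$. The open endpoints are consistent with the positivity: at $a=\tfrac14$ the form $D$ degenerates to $\sigma^2$ and ceases to be definite, while at $a=0$ the surface degenerates to a plane. The main obstacle is genuinely the fibre over $q$: it is invisible to the discriminant computation and sits exactly at the node, so it requires an explicit local analysis of the contraction of $C$ in $\widetilde X$ rather than the uniform positivity argument that settles all other directions.
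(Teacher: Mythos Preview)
Your argument is correct and rests on the same discriminant computation as the paper, but the case decomposition is organised differently. The paper normalises $\gamma_3=1$ and handles the two boundary hyperplanes $\gamma_3=0$ and $2\gamma_1+2\gamma_2+\gamma_3=0$ separately, each time invoking Lemma~\ref{lem:restrictionLogGauss} together with the degree-$2$ constraint (one real preimage forces both to be real); for the generic direction it checks that the univariate polynomial $P(x)=x^2-8ax+4a$ in $x=\gamma_1+\gamma_2+1$ has negative discriminant $16a(4a-1)$. You instead keep the discriminant as the homogeneous form $D(w,\sigma)$ and observe that the matrix $\begin{pmatrix}1-4a & 1-4a\\ 1-4a & 1\end{pmatrix}$ is positive definite, which is the same condition repackaged. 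This buys you a cleaner generic case: your positivity argument already absorbs the two boundary lines $w=0$ and $w+2\sigma=0$ (reading degenerate roots as boundary points), leaving only the single direction $q=[1:-1:0]$ where both forms vanish. Conversely, the paper's boundary argument via Lemma~\ref{lem:restrictionLogGauss} handles $q$ with no extra work, as just another point on the line $\gamma_3=0$, whereas your treatment of $q$ through the blow-up is more laboured than necessary---indeed $\overline\gamma_X^{-1}(q)$ is simply the real node $p$, which your own appeal to the degree-$1$ boundary curves already establishes without discussing $\widetilde X$ or $C$.
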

Note that even if $a\in(0,\frac{1}{4})$, the map $\widetilde \gamma_{X}$ is
almost
totally real, but not totally real since it
contracts the curve $C$ to a point.
\begin{proof}
 One has
 $$\gamma_X(z_1,z_2,z_3)=[z_1:z_2:2az_3^2+z_3].$$
Given $(\gamma_1:\gamma_2:\gamma_3 )\in \R^3\setminus\{(0,0,0)\}$, determining the real
points in the fibre
$\gamma_X^{-1}([\gamma_1:\gamma_2:\gamma_3 ])$ reduces to solve the
 system
$$(S)\quad \left\{
\begin{array}{rrr}
  az_3^2 + z_3 + z_2 +z_1 + 1 &=&0
  \\ z_1 &=& s\gamma_1
  \\ z_2&=&s\gamma_2
  \\ 2az_3^2+z_3&=&s\gamma_3
   \end{array}
\right.$$
 in the variables $z_1,z_2,z_3\in \R$ and $s\in \R^*$.

 Since the triangle with vertices $(0,0,0), (1,0,0), (0,1,0)$ is
 unimodular,  it follows from Lemma \ref{lem:restrictionLogGauss} that
 ${\overline{\gamma}}_X^{-1}([\gamma_1:\gamma_2:0 ])$ contains at least $1$ real
 point. Since it is a degree 2 map, the whole fibre must be contained in
 $\R\overline X$.
 Similarly, we have
 ${\overline{\gamma}}_X^{-1}([\gamma_1:\gamma_2:\gamma_3 ])\subset
 \R\overline X$
 if $2\gamma_1+2\gamma_2+\gamma_3=0$.
 
Assume now that $\gamma_3=1$ and  $2\gamma_1+2\gamma_2+1\ne 0$.
Then the system $(S)$ reduces to the system
$$\left\{
\begin{array}{rrr}
  -az_3^2 + s(\gamma_1+\gamma_2+1) + 1 &=&0
  \\ 2az_3^2+z_3&=&s
   \end{array}
\right. .$$
Substituting $s=2az_3^2+z_3$ in the first equation we obtain
$$a(2\gamma_1+2\gamma_2+1)z_3^2 +  (\gamma_1+\gamma_2+1)z_3 +1=0.$$
This is a degree 2 equation in the variable $z_3$ whose discriminant
is
$$ (\gamma_1+\gamma_2+1)^2 -4a(2\gamma_1+2\gamma_2+1) =
(\gamma_1+\gamma_2+1)^2 -8a(\gamma_1+\gamma_2+1) +4a. $$
The polynomial $P(x)=x^2 -8ax +4a$ has discriminant
$$16a(4a-1), $$
and so is negative if $a\in(0,\frac{1}{4})$. In this case
$P (\gamma_1+\gamma_2+1)$ is positive, and
${\overline{\gamma}}_X^{-1}([\gamma_1:\gamma_2:1 ])$ is composed of
$2$ points in $\R\overline X$.
\end{proof}

\bibliographystyle {alpha}
\bibliography {Biblio.bib}

\end{document}